\documentclass{amsproc}

\newtheorem{theorem}{Theorem}[section]
\newtheorem{lemma}[theorem]{Lemma}

\theoremstyle{definition}
\newtheorem{definition}[theorem]{Definition}
\newtheorem{example}[theorem]{Example}

\theoremstyle{remark}
\newtheorem{remark}[theorem]{Remark}

\numberwithin{equation}{section}


\begin{document}

\title[Isoperimetric Problems on Time Scales]{Isoperimetric Problems of the Calculus of Variations\\ 
on Time Scales}

\author{Rui A. C. Ferreira}
\address{Department of Mathematics,
University of Aveiro, 3810-193 Aveiro, Portugal}
\email{ruiacferreira@ua.pt}

\author{Delfim F. M. Torres}
\address{Department of Mathematics,
University of Aveiro, 3810-193 Aveiro, Portugal}
\email{delfim@ua.pt}

\thanks{This is a preprint accepted (July 16, 2008) 
for publication in the Proceedings of the 
\emph{Conference on Nonlinear Analysis and Optimization}, 
June 18-24, 2008, Technion, Haifa, Israel,
to appear in \emph{Contemporary Mathematics}.
The first author was supported by the PhD fellowship SFRH/BD/39816/2007; the second author by the R\&D unit CEOC,
via FCT and the EC fund FEDER/POCI 2010.}


\subjclass[2000]{49K05, 39A12}

\date{Received May 01, 2008 and, in revised form, July 17, 2008.}

\begin{abstract}
We prove a necessary optimality condition for isoperimetric
problems on time scales in the space of delta-differentiable functions with rd-continuous derivatives. The results are then applied to Sturm-Liouville eigenvalue problems on time scales.
\end{abstract}

\maketitle


\section{Introduction}

The theory of time scales (see Section~\ref{sec:Prel}
for basic definitions and results) is a relatively new area,
that unify and generalize difference and
differential equations \cite{livro}. It was initiated by Stefan
Hilger in the nineties of the XX century \cite{Hilger90,Hilger97}, and is now subject of strong current research in many different
fields in which dynamic processes can be described with discrete
or continuous models \cite{Agarwal}.

The study of the calculus of variations on time scales has began in 2004 with the paper \cite{CD:Bohner:2004} of Bohner, where the necessary optimality conditions of Euler-Lagrange
and Legendre, as well as a sufficient Jacobi-type condition,
are proved for the basic problem of the calculus of variations
with fixed endpoints. Since the pioneer paper \cite{CD:Bohner:2004}, the following classical results of the calculus of variations on continuous-time ($\mathbb{T} = \mathbb{R}$) and discrete-time ($\mathbb{T} = \mathbb{Z}$)
have been unified and generalized to a time scale $\mathbb{T}$:
the Noether's theorem \cite{ZbigDelfim};
the Euler-Lagrange equations for problems
of the calculus of variations with double integrals
\cite{BohnerGuseinov} and for problems with higher-order derivatives \cite{RD}; transversality conditions \cite{zeidan}.
The more general theory of the calculus of variations on time scales seems to be useful in applications to Economics \cite{Atici06}. Much remains to be done \cite{RD2},
and here we give a step further. Our main aim is to obtain a necessary optimality condition for isoperimetric problems on time scales. Corollaries include the classical case ($\mathbb{T}=\mathbb{R}$), which is extensively studied in the literature (see, \textrm{e.g.}, \cite{Brunt});
and discrete-time versions \cite{BHAR}.

The plan of the paper is as follows. Section~\ref{sec:Prel}
gives a short introduction to time scales, providing the
definitions and results needed in the sequel.
In Section~\ref{sec:MR} we prove a
necessary optimality condition for the isoperimetric problem on time scales (Theorem~\ref{T1}); then, we establish a connection (Theorem~\ref{T2}) with the previously studied
Sturm-Liouville eigenvalue problems on time scales \cite{SLP}.


\section{The calculus on time scales and preliminaries}
\label{sec:Prel}

We begin by recalling the main definitions and properties of time
scales (\textrm{cf.} \cite{Agarwal,livro,Hilger90,Hilger97} and
references therein).

A nonempty closed subset of $\mathbb{R}$ is called a \emph{Time
Scale} and is denoted by $\mathbb{T}$.
The \emph{forward jump operator}
$\sigma:\mathbb{T}\rightarrow\mathbb{T}$ is defined by
$\sigma(t)=\inf{\{s\in\mathbb{T}:s>t}\}$, for all
$t\in\mathbb{T}$,
while the \emph{backward jump operator}
$\rho:\mathbb{T}\rightarrow\mathbb{T}$ is defined by
$\rho(t)=\sup{\{s\in\mathbb{T}:s<t}\}$, for all
$t\in\mathbb{T}$, with $\inf\emptyset=\sup\mathbb{T}$ (\textrm{i.e.},
$\sigma(M)=M$ if $\mathbb{T}$ has a maximum $M$) and
$\sup\emptyset=\inf\mathbb{T}$ (\textrm{i.e.}, 
$\rho(m)=m$ if $\mathbb{T}$
has a minimum $m$).
A point $t\in\mathbb{T}$ is called \emph{right-dense},
\emph{right-scattered}, \emph{left-dense} and
\emph{left-scattered} if $\sigma(t)=t$, $\sigma(t)>t$, $\rho(t)=t$
and $\rho(t)<t$, respectively.
Throughout the text we let $[a,b]=\{t\in\mathbb{T}:a\leq t\leq
b\}$ with $a,b\in\mathbb{T}$. We define
$\mathbb{T}^\kappa=\mathbb{T}\backslash(\rho(b),b]$ and
$\mathbb{T}^{\kappa^2}=\left(\mathbb{T}^\kappa\right)^\kappa$.
The \emph{graininess function}
$\mu:\mathbb{T}\rightarrow[0,\infty)$ is defined by
$\mu(t)=\sigma(t)-t$, for all $t\in\mathbb{T}$.
We say that a function $f:\mathbb{T}\rightarrow\mathbb{R}$ is
\emph{delta differentiable} at $t\in\mathbb{T}^\kappa$ 
if there is a
number $f^{\Delta}(t)$ such that for all $\varepsilon>0$ there
exists a neighborhood $U$ of $t$ (\textrm{i.e.},
$U=(t-\delta,t+\delta)\cap\mathbb{T}$ for some $\delta>0$) such
that
$$|f(\sigma(t))-f(s)-f^{\Delta}(t)(\sigma(t)-s)|
\leq\varepsilon|\sigma(t)-s|,\mbox{ for all $s\in U$}.$$ We call
$f^{\Delta}(t)$ the \emph{delta derivative} of $f$ at $t$.
For delta differentiable $f$ and $g$, the next formulas hold:
\begin{align}
f^\sigma(t)&=f(t)+\mu(t)f^\Delta(t) \, , \label{transfor}\\
(fg)^\Delta(t)&=f^\Delta(t)g^\sigma(t)+f(t)g^\Delta(t)\nonumber\\
&=f^\Delta(t)g(t)+f^\sigma(t)g^\Delta(t)\nonumber,
\end{align}
where we abbreviate $f\circ\sigma$ by $f^\sigma$.
A function $f:\mathbb{T}\rightarrow\mathbb{R}$ is called
\emph{rd-continuous} if it is continuous in right-dense points
and if its left-sided limit exists in left-dense points. We denote the set of all rd-continuous functions by C$_{\textrm{rd}}$ or
C$_{\textrm{rd}}[\mathbb{T}]$ and the set of all delta
differentiable functions with rd-continuous derivative by
C$_{\textrm{rd}}^1$ or C$_{\textrm{rd}}^1[\mathbb{T}]$.
It is useful to provide an example to the reader with the 
concepts introduced so far. Consider
$\mathbb{T}=\bigcup_{k=0}^{\infty}[2k,2k+1]$. For this time scale,
\[ 
\mu(t) = \left\{ 
\begin{array}{ll}
0 & \mbox{if $t \in\bigcup_{k=0}^\infty[2k,2k+1)$};\\
1 & \mbox{if $t \in\bigcup_{k=0}^\infty\{2k+1\}$}.
\end{array}
\right. \] 
Let us consider $t\in[0,1]\cap\mathbb{T}$. 
Then, we have (see \cite[Theorem~1.16]{livro})
\begin{equation*}
f^\Delta(t)=\lim_{s\rightarrow t}\frac{f(t)-f(s)}{t-s},\
t\in[0,1)\, ,
\end{equation*}
provided this limit exists, and
$$f^\Delta(1)=\frac{f(2)-f(1)}{2-1},$$
provided $f$ is continuous at $t=1$. Let
\[ 
f(t) = \left\{ 
\begin{array}{ll}
t & \mbox{if $t\in[0,1)$};\\
2 & \mbox{if $t=1$}.
\end{array} 
\right.
\]
We observe that at $t=1$ $f$ is rd-continuous (since
$\lim_{t\rightarrow 1}f(t)=1$) but not continuous 
(since $f(1)=2$).

It is known that rd-continuous functions possess an
\emph{antiderivative}, \textrm{i.e.}, 
there exists a function $F$ with
$F^\Delta=f$, and in this case an \emph{integral} is defined by
$\int_{a}^{b}f(t)\Delta t=F(b)-F(a)$. It satisfies
\begin{equation}
\label{sigma} \int_t^{\sigma(t)}f(\tau)\Delta\tau=\mu(t)f(t) \, .
\end{equation}
Lemma~\ref{integracao partes} gives the integration 
by parts formulas of the delta integral:
\begin{lemma}[\cite{livro}]
\label{integracao partes} If $a,b\in\mathbb{T}$ and
$f,g\in$C$_{\textrm{rd}}^1$, then
\begin{equation}
\label{part1} \int_{a}^{b}f(\sigma(t))g^{\Delta}(t)\Delta t
 =\left[(fg)(t)\right]_{t=a}^{t=b}-\int_{a}^{b}f^{\Delta}(t)g(t)\Delta
 t,
\end{equation}
\begin{equation}
\label{part2} \int_{a}^{b}f(t)g^{\Delta}(t)\Delta t
=\left[(fg)(t)\right]_{t=a}^{t=b}-\int_{a}^{b}f^{\Delta}(t)g(\sigma(t))\Delta
t.
\end{equation}
\end{lemma}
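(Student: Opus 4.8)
The plan is to derive both identities directly from the two forms of the Leibniz product rule displayed above, combined with the defining property of the delta integral: if $F^\Delta = h$ for an rd-continuous $h$, then $\int_a^b h(t)\,\Delta t = F(b)-F(a)$. The crucial observation is that for $f,g\in$ C$_{\textrm{rd}}^1$ the product $fg$ is again delta differentiable and is an antiderivative of $(fg)^\Delta$, so that $\int_a^b (fg)^\Delta(t)\,\Delta t = [(fg)(t)]_{t=a}^{t=b}$. This telescoping value is precisely the boundary term on the right-hand side of both \eqref{part1} and \eqref{part2}, so the whole task reduces to integrating an algebraic identity term by term.

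To establish \eqref{part2} I would start from the first form of the product rule, $(fg)^\Delta = f^\Delta g^\sigma + f g^\Delta$, and solve for the term of interest, obtaining $f g^\Delta = (fg)^\Delta - f^\Delta g^\sigma$. Integrating both sides over $[a,b]$ and applying the Fundamental Theorem to $\int_a^b (fg)^\Delta$ yields $\int_a^b f(t)g^\Delta(t)\,\Delta t = [(fg)(t)]_{t=a}^{t=b} - \int_a^b f^\Delta(t)g(\sigma(t))\,\Delta t$, which is exactly \eqref{part2}. For \eqref{part1} I would run the same argument using instead the second form $(fg)^\Delta = f^\Delta g + f^\sigma g^\Delta$; rearranging to $f^\sigma g^\Delta = (fg)^\Delta - f^\Delta g$ and integrating gives \eqref{part1}. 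Thus the two formulas are nothing more than the integrated versions of the two product rules.

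The one point that genuinely requires the hypothesis $f,g\in$ C$_{\textrm{rd}}^1$---and hence the only place deserving care---is the legitimacy of the integration step: each integrand must possess an antiderivative, \textrm{i.e.}, be rd-continuous, for the delta integral to be defined and the Fundamental Theorem to apply. Since $f,g\in$ C$_{\textrm{rd}}^1$, the functions $f,g,f^\Delta,g^\Delta$ are all rd-continuous, and by \eqref{transfor} we have $g^\sigma = g + \mu g^\Delta$ together with the analogous expression for $f^\sigma$; hence each of $f^\Delta g^\sigma$, $f g^\Delta$, $f^\Delta g$, $f^\sigma g^\Delta$, and $(fg)^\Delta$ is rd-continuous, $fg\in$ C$_{\textrm{rd}}^1$, and every integral written above is well defined. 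I expect no real obstacle beyond this verification, since the content of the lemma is exactly the termwise integration of the product rule.
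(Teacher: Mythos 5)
Your proposal is correct: integrating the two forms of the product rule $(fg)^\Delta=f^\Delta g^\sigma+fg^\Delta=f^\Delta g+f^\sigma g^\Delta$ and evaluating $\int_a^b(fg)^\Delta(t)\,\Delta t$ via the antiderivative $fg$ is exactly the standard argument, and your verification that every integrand is rd-continuous (using \eqref{transfor}) is the right point to check. The paper itself gives no proof---it cites \cite{livro}---and your argument coincides with the proof found there, so there is nothing to flag.
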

The following time scale DuBois-Reymond lemma
will be useful for our purposes:
\begin{lemma}[\cite{CD:Bohner:2004}]
\label{lem:DR} Let $g\in C_{\textrm{rd}}$,
$g:[a,b]^\kappa\rightarrow\mathbb{R}^n$. Then,
$$\int_{a}^{b}g^T(t)\eta^\Delta(t)\Delta t=0,
\mbox{for all $\eta\in C_{\textrm{rd}}^1$ with
$\eta(a)=\eta(b)=0$}$$ holds if and only if $$g(t)=c,\mbox{on
$[a,b]^\kappa$ for some $c\in\mathbb{R}^n$}.$$
\end{lemma}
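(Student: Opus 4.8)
The plan is to treat the two implications separately; the forward (``only if'') direction carries essentially all of the work. For the reverse direction, suppose $g(t)=c$ on $[a,b]^\kappa$ for some constant $c\in\mathbb{R}^n$. Then for every admissible $\eta$ (that is, $\eta\in C_{\textrm{rd}}^1$ with $\eta(a)=\eta(b)=0$) I would simply pull the constant out of the integral and apply the fundamental theorem of the calculus for the delta integral,
$$\int_a^b g^T(t)\eta^\Delta(t)\,\Delta t=c^T\int_a^b\eta^\Delta(t)\,\Delta t=c^T\bigl(\eta(b)-\eta(a)\bigr)=0,$$
so the integral condition holds.

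For the forward direction the idea is to produce one cleverly chosen admissible $\eta$ that forces $g$ to be constant. First I would set the candidate constant equal to the delta-mean value
$$c=\frac{1}{b-a}\int_a^b g(\tau)\,\Delta\tau\in\mathbb{R}^n,$$
and define $\eta(t)=\int_a^t\bigl(g(\tau)-c\bigr)\Delta\tau$. Since $g-c$ is rd-continuous it possesses this antiderivative, so $\eta\in C_{\textrm{rd}}^1$ with $\eta^\Delta=g-c$ on $[a,b]^\kappa$; clearly $\eta(a)=0$, and the specific choice of $c$ is exactly what makes $\eta(b)=\int_a^b\bigl(g(\tau)-c\bigr)\Delta\tau=0$, so $\eta$ is admissible. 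Feeding this $\eta$ into the hypothesis gives $\int_a^b g^T(t)\bigl(g(t)-c\bigr)\Delta t=0$; since moreover $\int_a^b c^T\bigl(g(t)-c\bigr)\Delta t=c^T\int_a^b(g-c)\,\Delta t=0$ by the defining property of $c$, subtracting the two identities yields
$$\int_a^b\bigl(g(t)-c\bigr)^T\bigl(g(t)-c\bigr)\,\Delta t=0.$$

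The remaining step, and the one I expect to require the most care on a time scale, is to deduce $g\equiv c$ on $[a,b]^\kappa$ from the vanishing of this integral. The integrand $\|g(t)-c\|^2$ is nonnegative and rd-continuous, so I would argue by contradiction: if $g(t_0)\neq c$ at some $t_0\in[a,b]^\kappa$, I would split according to the nature of $t_0$. When $t_0$ is right-scattered, the integral absorbs the strictly positive contribution $\int_{t_0}^{\sigma(t_0)}\|g-c\|^2\,\Delta\tau=\mu(t_0)\|g(t_0)-c\|^2$ coming from \eqref{sigma}; when $t_0$ is right-dense, continuity of the integrand at $t_0$ keeps it bounded away from zero on a nondegenerate right-neighborhood. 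In either case the integral would be strictly positive, a contradiction. This scattered/dense dichotomy is precisely the feature that distinguishes the argument from its classical $\mathbb{T}=\mathbb{R}$ counterpart, and is the main obstacle to making the conclusion fully rigorous.
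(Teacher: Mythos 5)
Your proposal is correct and is essentially the proof of this lemma found in the cited source \cite{CD:Bohner:2004} (the present paper quotes the lemma from there without reproving it): the sufficiency via the fundamental theorem, and the necessity via the mean-value constant $c=\frac{1}{b-a}\int_a^b g(\tau)\Delta\tau$, the test function $\eta(t)=\int_a^t\bigl(g(\tau)-c\bigr)\Delta\tau$, and the deduction $\int_a^b\|g(t)-c\|^2\Delta t=0$ followed by the scattered/dense positivity argument. The only point to polish is $t_0=b$ in the case that $b$ is left-dense (then $b\in[a,b]^\kappa$ but has no right-neighborhood in $[a,b]$, so your ``nondegenerate right-neighborhood'' step does not apply there); this is repaired by noting that $b$ is right-dense under the convention $\sigma(b)=b$, so rd-continuity forces continuity of $g$ at $b$ and hence $g(b)=\lim_{s\to b^-}g(s)=c$ once $g\equiv c$ on $[a,b)$.
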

Finally, we prove a simple but useful technical lemma.
\begin{lemma}
\label{lemtecn} Suppose that a continuous function
$f:\mathbb{T}\rightarrow\mathbb{R}$ is such that $f^\sigma(t)=0$
for all $t\in\mathbb{T}^\kappa$. Then, $f(t)=0$ for all
$t\in\mathbb{T}\backslash \{a\}$ if $a$ is right-scattered.
\end{lemma}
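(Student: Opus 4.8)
The plan is to prove that the hypothesis $f^\sigma\equiv 0$ on $\mathbb{T}^\kappa$ forces $f$ to vanish at every point of $\mathbb{T}$ that can be reached as $\sigma(t)$ with $t\in\mathbb{T}^\kappa$, and to recover the remaining points by continuity. The one structural fact I would isolate at the outset is that $(\rho(b),b]$ meets $\mathbb{T}$ in at most the single point $b$ (it equals $\{b\}$ when $b$ is left-scattered and is empty when $b$ is left-dense); consequently every point of $\mathbb{T}$ lying strictly below $b$ automatically belongs to $\mathbb{T}^\kappa=\mathbb{T}\setminus(\rho(b),b]$. With this in hand I would fix an arbitrary $s\in\mathbb{T}\setminus\{a\}$ and argue according to whether $s$ is left-scattered or left-dense.

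If $s$ is left-scattered, then $\rho(s)<s$ and $\sigma(\rho(s))=s$. Since $\rho(s)<s\le b$, the observation above gives $\rho(s)\in\mathbb{T}^\kappa$, so the hypothesis applied at $t=\rho(s)$ yields $0=f^\sigma(\rho(s))=f(\sigma(\rho(s)))=f(s)$. Note that this already covers the right endpoint $b$ in the case it is left-scattered --- the only point that can be lost in passing from $\mathbb{T}$ to $\mathbb{T}^\kappa$ --- by taking $t=\rho(b)$.

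If instead $s$ is left-dense, then because $s\ne a$ is not the minimum there are genuinely points of $\mathbb{T}$ below $s$ accumulating at it; I would choose $t_n\in\mathbb{T}$ with $t_n<s$ and $t_n\to s$. As $s\in\mathbb{T}$ and $s>t_n$, one has $t_n\le\sigma(t_n)\le s$, whence $\sigma(t_n)\to s$ by squeezing. Each $t_n<b$ lies in $\mathbb{T}^\kappa$, so $f(\sigma(t_n))=0$, and continuity of $f$ gives $f(s)=\lim_n f(\sigma(t_n))=0$. Together with the previous case this exhausts $\mathbb{T}\setminus\{a\}$.

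The hypothesis that $a$ is right-scattered is there precisely to explain why $a$ must be excluded: being the minimum, $a$ has no points of $\mathbb{T}$ to its left on which to run the limiting argument, and being right-scattered it is never of the form $\sigma(t)$ for $t\in\mathbb{T}^\kappa$, so $f(a)$ remains unconstrained; had $a$ been right-dense instead, taking $t=a$ would have forced $f(a)=0$ as well. I expect the only real (and modest) difficulty to be the boundary bookkeeping --- confirming the memberships $\rho(s),t_n\in\mathbb{T}^\kappa$ --- rather than anything conceptual, which is exactly why I would establish the membership fact about $(\rho(b),b]$ once and reuse it in both cases.
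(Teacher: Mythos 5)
Your proof is correct, but it is organized differently from the paper's. The paper splits on the \emph{right}-classification of $t$: right-dense points are dispatched immediately via $f(t)=f^\sigma(t)=0$, and only right-scattered points are sub-split into left-scattered (where it uses exactly your pullback $0=f^\sigma(\rho(t))=f(t)$) and left-dense, where it runs an explicit $\varepsilon$--$\delta$ argument using the continuity of \emph{both} $f$ and $f^\sigma$ at $t$ (choosing $s_1$ close to $t$ and $s_2=\sigma(s_1)$) to conclude $f(t)=f^\sigma(t)$. You instead make a single split on the \emph{left}-classification of $s\in\mathbb{T}\setminus\{a\}$ and, at left-dense points, replace the $\varepsilon$--$\delta$ estimate by the sequential argument $\sigma(t_n)\to s$, which needs only the continuity of $f$ together with the known values $f(\sigma(t_n))=f^\sigma(t_n)=0$; this lets you dispense entirely with the paper's opening observation that $f^\sigma$ is delta differentiable, hence continuous, on $\mathbb{T}^\kappa$. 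Your version also buys some rigor the paper leaves implicit: the isolated fact that $(\rho(b),b]\cap\mathbb{T}\subseteq\{b\}$ explicitly justifies the memberships $\rho(s)\in\mathbb{T}^\kappa$ and $t_n\in\mathbb{T}^\kappa$ needed to invoke the hypothesis, and it accounts cleanly for the endpoint $b$ in both cases, whereas the paper does not spell out this bookkeeping (e.g.\ that a right-scattered $t$ necessarily lies in $\mathbb{T}^\kappa$). Your closing explanation of why $a$ must be excluded precisely when it is right-scattered matches the intended content of the lemma. Both arguments are elementary and of essentially the same depth; yours is a mild but genuine reorganization that trades the paper's $\varepsilon$--$\delta$ computation for a shorter limit argument.
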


\begin{proof}
First note that, since $f^\sigma(t)=0$, then $f^\sigma(t)$ is
delta differentiable, hence continuous for all $t\in\mathbb{T}^\kappa$.
Now, if $t$ is right-dense, the result is obvious. Suppose that
$t$ is right-scattered. We will analyze two cases: (i) if $t$ is
left-scattered, then $t\neq a$ and by hypothesis
$0=f^\sigma(\rho(t))=f(t)$; (ii) if $t$ is left-dense, then,
by the continuity of $f^\sigma$ and $f$ at $t$, we can write
\begin{align}
\forall\varepsilon>0 \, \exists\delta_1>0 : \forall
s_1\in(t-\delta_1,t],&\label{101} \mbox{ we have }
|f^\sigma(s_1)-f^\sigma(t)|<\varepsilon \, , \\
\forall \varepsilon>0 \, \exists\delta_2>0:\forall
s_2\in(t-\delta_2,t],&\label{102} \mbox{ we have }
|f(s_2)-f(t)|<\varepsilon \, ,
\end{align}
respectively.
Let $\delta=\min\{\delta_1,\delta_2\}$ and take
$s_1\in(t-\delta,t)$. As $\sigma(s_1)\in(t-\delta,t)$,
take $s_2=\sigma(s_1)$. By (\ref{101}) and (\ref{102}), we have:
$$
|-f^\sigma(t)+f(t)|=|f^\sigma(s_1)-f^\sigma(t)
+f(t)-f(s_2)|\leq|f^\sigma(s_1)-f^\sigma(t)|+|f(s_2)-f(t)|<2\varepsilon.
$$
Since $\varepsilon$ is arbitrary, $|-f^\sigma(t)+f(t)|=0$,
which is equivalent to $f(t)=f^\sigma(t)$.
\end{proof}


\section{Main results}
\label{sec:MR}

We start in \S\ref{sub:sec:ip} by defining the isoperimetric
problem on time scales and proving a correspondent first-order necessary optimality condition (Theorem~\ref{T1}). Then, in \S\ref{sub:sec:ep}, we show that certain eigenvalue problems can
be recast as an isoperimetric problem (Theorem~\ref{T2}).


\subsection{Isoperimetric problems}
\label{sub:sec:ip}

Let $J:$ C$_{\textrm{rd}}^1\rightarrow\mathbb{R}$ be a functional
defined on the function space (C$_{\textrm{rd}}^1,\|\cdot\|)$ and
let $S\subseteq$ C$_{\textrm{rd}}^1$.

\begin{definition}
\label{localmin} The functional $J$ is said to have a \emph{local
minimum} in $S$ at $y_\ast\in S$ if there exists a $\delta>0$ such that $J(y_\ast)\leq J(y)$ for all $y\in S$ satisfying
$\|y-y_\ast\|<\delta$.
\end{definition}

Now, let $J:$ C$_{\textrm{rd}}^1\rightarrow\mathbb{R}$ be a
functional of the form

\begin{equation}
\label{P0} J(y)=\int_a^b L(t,y^\sigma(t),y^\Delta(t))\Delta t,
\end{equation}
where
$L(t,x,v):[a,b]^\kappa\times\mathbb{R}\times\mathbb{R}\rightarrow\mathbb{R}$
has continuous partial derivatives $L_x(t,x,v)$ and $L_v(t,x,v)$,
respectively with respect to the second and third variables,
for all $t\in[a,b]^\kappa$, and is such that
$L(t,y^\sigma(t),y^\Delta(t))$, $L_x(t,y^\sigma(t),y^\Delta(t))$
and $L_v(t,y^\sigma(t),y^\Delta(t))$ are rd-continuous in $t$ for
all $y\in$ C$_{\textrm{rd}}^1$. The \emph{isoperimetric problem}
consists of finding functions $y$ satisfying given boundary
conditions
\begin{equation}
\label{bouncond} y(a)=y_a,\ y(b)=y_b,
\end{equation}
and a constraint of the form
\begin{equation}
\label{isocons} I(y)=\int_a^b g(t,y^\sigma(t),y^\Delta(t))\Delta
t=l,
\end{equation}
where
$g(t,x,v):[a,b]^\kappa\times\mathbb{R}\times\mathbb{R}\rightarrow\mathbb{R}$
has continuous partial derivatives with respect to the second and
third variables for all $t\in[a,b]^\kappa$,
$g(t,y^\sigma(t),y^\Delta(t))$, $g_x(t,y^\sigma(t),y^\Delta(t))$
and $g_v(t,y^\sigma(t),y^\Delta(t))$ are rd-continuous in $t$ for
all $y\in$ C$_{\textrm{rd}}^1$, and $l$ is a specified real
constant, that takes (\ref{P0}) to a minimum.

\begin{definition}
We say that a function $y\in$ C$_{\textrm{rd}}^1$ is
\emph{admissible} for the isoperimetric problem if it satisfies
(\ref{bouncond}) and (\ref{isocons}).
\end{definition}

\begin{definition}
An admissible function $y_\ast$ is said to be
an \emph{extremal} for $I$ if it satisfies the
following equation (\textrm{cf.} \cite{CD:Bohner:2004}):
$$g_v(t,y^\sigma_\ast(t),y^\Delta_\ast(t))-\int_a^t
g_x(\tau,y^\sigma_\ast(\tau),y^\Delta_\ast(\tau))\Delta\tau=c,$$
for all $t\in[a,b]^\kappa$ and some constant $c$.
\end{definition}

\begin{theorem}
\label{T1} Suppose that $J$ has a local minimum at $y_\ast\in$
$C_{\textrm{rd}}^1$ subject to the boundary conditions
(\ref{bouncond}) and the isoperimetric constraint (\ref{isocons}), and that $y_\ast$ is not an extremal for the functional $I$. Then, there exists a Lagrange multiplier constant
$\lambda$ such that $y_\ast$ satisfies the following equation:
\begin{equation}
\label{E-L}
F_v^\Delta(t,y^\sigma_\ast(t),y^\Delta_\ast(t))
-F_x(t,y^\sigma_\ast(t),y^\Delta_\ast(t))=0,\
\mbox{for all} \ t\in[a,b]^{\kappa^2},
\end{equation}
where $F=L-\lambda g$ and $F_v^\Delta$ denotes the delta
derivative of a composition.
\end{theorem}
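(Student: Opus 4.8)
The plan is to follow the classical Lagrange-multiplier strategy, reducing the constrained variational problem to a finite-dimensional one through a two-parameter family of admissible variations. First I would introduce
$$y = y_\ast + \epsilon_1 \eta_1 + \epsilon_2 \eta_2,$$
where $\eta_1,\eta_2\in C^1_{\mathrm{rd}}$ are arbitrary but fixed with $\eta_i(a)=\eta_i(b)=0$ ($i=1,2$), so that \eqref{bouncond} is preserved for every $(\epsilon_1,\epsilon_2)$. Setting $\hat J(\epsilon_1,\epsilon_2)=J(y)$ and $\hat I(\epsilon_1,\epsilon_2)=I(y)-l$, the differentiability hypotheses on $L$ and $g$ let me differentiate under the integral sign, giving at the origin
$$\frac{\partial \hat I}{\partial \epsilon_2}\Big|_{(0,0)} = \int_a^b \big[ g_x\,\eta_2^\sigma + g_v\,\eta_2^\Delta\big]\Delta t,$$
with $g_x,g_v$ evaluated along $y_\ast$, and an analogous expression for $\partial\hat J/\partial\epsilon_1$.

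The decisive step is to exploit the non-extremality hypothesis. Writing $A(t)=\int_a^t g_x(\tau,y_\ast^\sigma,y_\ast^\Delta)\Delta\tau$, so that $A^\Delta=g_x$ and $A(a)=0$, I would apply the integration-by-parts formula \eqref{part2} to the term $\int_a^b g_x\,\eta_2^\sigma\,\Delta t$; since $A(a)=\eta_2(b)=0$ the boundary contribution vanishes and
$$\frac{\partial \hat I}{\partial \epsilon_2}\Big|_{(0,0)} = \int_a^b \Big[ g_v(t,y_\ast^\sigma,y_\ast^\Delta) - \int_a^t g_x(\tau,y_\ast^\sigma,y_\ast^\Delta)\Delta\tau\Big]\eta_2^\Delta(t)\,\Delta t.$$
By the DuBois-Reymond Lemma~\ref{lem:DR}, this integral vanishes for \emph{every} such $\eta_2$ precisely when $y_\ast$ is an extremal for $I$. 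As $y_\ast$ is assumed not to be an extremal, I may fix an $\eta_2$ for which $\partial\hat I/\partial\epsilon_2|_{(0,0)}\neq 0$. This is exactly the constraint qualification needed to solve $\hat I(\epsilon_1,\epsilon_2)=0$ for $\epsilon_2=\epsilon_2(\epsilon_1)$ near the origin by the Implicit Function Theorem, and to invoke the classical Lagrange multiplier rule: since $(0,0)$ is a minimizer of $\hat J$ subject to $\hat I=0$ with $\nabla\hat I(0,0)\neq 0$, there is a constant $\lambda$ with $\nabla\hat J(0,0)=\lambda\,\nabla\hat I(0,0)$.

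Reading off the $\epsilon_1$-component of this identity and writing $F=L-\lambda g$ yields
$$\int_a^b \big[ F_x\,\eta_1^\sigma + F_v\,\eta_1^\Delta\big]\Delta t = 0$$
for the arbitrary variation $\eta_1$ with $\eta_1(a)=\eta_1(b)=0$. Repeating the integration-by-parts reduction (now with $\int_a^t F_x\,\Delta\tau$) recasts this as $\int_a^b \big[F_v-\int_a^t F_x\,\Delta\tau\big]\eta_1^\Delta\,\Delta t=0$, so Lemma~\ref{lem:DR} forces
$$F_v(t,y_\ast^\sigma,y_\ast^\Delta) - \int_a^t F_x(\tau,y_\ast^\sigma,y_\ast^\Delta)\Delta\tau = c$$
on $[a,b]^\kappa$ for some constant $c$; delta-differentiating gives \eqref{E-L} on $[a,b]^{\kappa^2}$. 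I expect the main obstacle to be the rigorous passage from the infinite-dimensional constrained minimum to the finite-dimensional multiplier rule: verifying that the non-extremality hypothesis genuinely supplies the constraint qualification, and that the Implicit Function Theorem produces admissible competitors along which $J(y_\ast)$ remains a minimum, so that the multiplier $\lambda$ really exists.
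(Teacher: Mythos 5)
Your proposal is correct and follows essentially the same route as the paper's own proof: the same two-parameter family of variations, the DuBois--Reymond lemma combined with non-extremality of $y_\ast$ to secure $\partial\hat I/\partial\epsilon_2(0,0)\neq 0$, the implicit function theorem plus the finite-dimensional Lagrange multiplier rule, and a final integration by parts, second application of the DuBois--Reymond lemma, and delta-differentiation. The only cosmetic difference is that you invoke \eqref{part2} where the paper cites \eqref{part1}; these are the same identity with the roles of $f$ and $g$ exchanged, so nothing changes.
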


\begin{proof}
Let $y_\ast$ be a local minimum for $J$ and consider neighboring
functions of the form
\begin{equation}
\label{admfunct}
\hat{y}=y_\ast+\varepsilon_1\eta_1+\varepsilon_2\eta_2,
\end{equation}
where for each $i\in\{1,2\}$, $\varepsilon_i$ is a sufficiently
small parameter ($\varepsilon_1$ and $\varepsilon_2$ must be such
that $\|\hat{y}-y^\ast\|<\delta$, for some $\delta>0$ -- see
Definition~\ref{localmin}), $\eta_i(x)\in$ C$_{\textrm{rd}}^1$
and $\eta_i(a)=\eta_i(b)=0$. Here, $\eta_1$ is an arbitrary fixed
function and $\eta_2$ is a fixed function that we will choose
later.

First we show that (\ref{admfunct}) has a subset of
admissible functions for the isoperimetric problem. Consider the
quantity
$$I(\hat{y})=\int_a^b
g(t,y_\ast^\sigma(t)+\varepsilon_1\eta_1^\sigma(t)
+\varepsilon_2\eta_2^\sigma(t),y_\ast^\Delta(t)
+\varepsilon_1\eta_1^\Delta(t)+\varepsilon_2\eta_2^\Delta(t))\Delta
t.$$
Then we can regard $I(\hat{y})$ as a function of $\varepsilon_1$
and $\varepsilon_2$, say
$I(\hat{y})=\hat{Q}(\varepsilon_1,\varepsilon_2)$. Since $y_\ast$
is a local minimum for $J$ subject to the boundary conditions and
the isoperimetric constraint, putting
$Q(\varepsilon_1,\varepsilon_2)=\hat{Q}(\varepsilon_1,\varepsilon_2)-l$
we have that
\begin{equation}
\label{implicit1} Q(0,0)=0.
\end{equation}
By the conditions imposed on $g$, we have
\begin{align}
\frac{\partial Q}{\partial\varepsilon_2}(0,0)&=\int_a^b\left[
g_x(t,y^\sigma_\ast(t),y^\Delta_\ast(t))\eta_2^\sigma(t)
+g_v(t,y^\sigma_\ast(t),y^\Delta_\ast(t))\eta_2^\Delta(t)\right]\Delta
t\nonumber\\ &=\int_a^b
\left[g_v(t,y^\sigma_\ast(t),y^\Delta_\ast(t))-\int_a^t
g_x(\tau,y^\sigma_\ast(\tau),y^\Delta_\ast(\tau))\Delta\tau\right]\eta_2^\Delta(t)\Delta
t\label{rui0},
\end{align}
where (\ref{rui0}) follows from (\ref{part1}) and the fact that
$\eta_2(a)=\eta_2(b)=0$. Now, the function
$$E(t)=g_v(t,y^\sigma_\ast(t),y^\Delta_\ast(t))-\int_a^t
g_x(\tau,y^\sigma_\ast(\tau),y^\Delta_\ast(\tau))\Delta\tau$$ is
rd-continuous on $[a,b]^\kappa$. Hence, we can apply Lemma~\ref{lem:DR} to show that there exists a function $\eta_2\in$
C$_{\textrm{rd}}^1$ such that
\begin{equation}
\int_a^b \left[g_v(t,y^\sigma_\ast(t),y^\Delta_\ast(t))-\int_a^t
g_x(\tau,y^\sigma_\ast(\tau),y^\Delta_\ast(\tau))\Delta\tau\right]\eta_2^\Delta(t)\Delta
t\neq 0\nonumber,
\end{equation}
provided $y_\ast$ is not an extremal for $I$, which is indeed the
case. We have just proved that
\begin{equation}
\label{implicit2} \frac{\partial
Q}{\partial\varepsilon_2}(0,0)\neq 0.
\end{equation}
Using (\ref{implicit1}) and (\ref{implicit2}), the implicit
function theorem asserts that there exist neighborhoods $N_{1}$ and $N_{2}$ of $0$, $N_{1}\subseteq\{\varepsilon_1\ \mbox{from}\
(\ref{admfunct})\}\cap\mathbb{R}$  and
$N_{2}\subseteq\{\varepsilon_2\ \mbox{from}\
(\ref{admfunct})\}\cap\mathbb{R}$, and a function
$\varepsilon_2:N_{1}\rightarrow\mathbb{R}$ such that for all
$\varepsilon_1\in N_{1}$ we have

$$Q(\varepsilon_1,\varepsilon_2(\varepsilon_1))=0,$$
which is equivalent to
$\hat{Q}(\varepsilon_1,\varepsilon_2(\varepsilon_1))=l$.
Now we derive the necessary condition (\ref{E-L}). Consider
the quantity $J(\hat{y})=K(\varepsilon_1,\varepsilon_2)$. By
hypothesis, $K$ is minimum at $(0,0)$ subject to the constraint
$Q(0,0)=0$, and we have proved that $\nabla Q(0,0)\neq \textbf{0}$.
We can appeal to the Lagrange multiplier rule (see, \textrm{e.g.}, \cite[Theorem~4.1.1]{Brunt}) to assert that there exists a number $\lambda$ such that
\begin{equation}
\label{rui1} \nabla(K(0,0)-\lambda Q(0,0))=\textbf{0}.
\end{equation}
Having in mind that $\eta_1(a)=\eta_1(b)=0$, we can write:
\begin{align}
\label{rui2} \frac{\partial
K}{\partial\varepsilon_1}(0,0)&=\int_a^b\left[
L_x(t,y^\sigma_\ast(t),y^\Delta_\ast(t))\eta_1^\sigma(t)
+L_v(t,y^\sigma_\ast(t),y^\Delta_\ast(t))\eta_1^\Delta(t)\right]\Delta
t\nonumber\\ &=\int_a^b
\left[L_v(t,y^\sigma_\ast(t),y^\Delta_\ast(t))-\int_a^t
L_x(\tau,y^\sigma_\ast(\tau),y^\Delta_\ast(\tau))\Delta\tau\right]\eta_1^\Delta(t)\Delta
t.
\end{align}
Similarly, we have that
\begin{equation}
\label{rui3} \frac{\partial
Q}{\partial\varepsilon_1}(0,0)=\int_a^b
\left[g_v(t,y^\sigma_\ast(t),y^\Delta_\ast(t))-\int_a^t
g_x(\tau,y^\sigma_\ast(\tau),y^\Delta_\ast(\tau))\Delta\tau\right]\eta_1^\Delta(t)\Delta
t.
\end{equation}
Combining (\ref{rui1}), (\ref{rui2}) and (\ref{rui3}), we obtain
\begin{equation}
\int_a^b \left\{L_v(\cdot)-\int_a^t
L_x(\cdot\cdot)\Delta\tau-\lambda\left(g_v(\cdot)-\int_a^t
g_x(\cdot\cdot)\Delta\tau \right)\right\}\eta^\Delta_1(t)\Delta
t=0,\nonumber
\end{equation}
where $(\cdot)=(t,y^\sigma_\ast(t),y^\Delta_\ast(t))$ and
$(\cdot\cdot)=(\tau,y^\sigma_\ast(\tau),y^\Delta_\ast(\tau))$.
Since $\eta_1$ is arbitrary, Lemma~\ref{lem:DR} implies that
there exists a constant $d$ such that
\begin{equation}
L_v(\cdot)-\lambda g_v(\cdot)-\left(\int_a^t
[L_x(\cdot\cdot)-\lambda g_x(\cdot\cdot)]\Delta\tau\right)=d,\
t\in[a,b]^\kappa,\nonumber
\end{equation}
or
\begin{equation}
\label{quaseE-L} F_v(\cdot)-\int_a^t F_x(\cdot\cdot)\Delta\tau=d,
\end{equation}
with $F=L-\lambda g$. Since the integral and the constant in
(\ref{quaseE-L}) are delta differentiable, we obtain the desired
necessary optimality condition (\ref{E-L}).
\end{proof}

\begin{remark}
\label{rem:max}
Theorem~\ref{T1} remains valid when $y_\ast$ is assumed to be a local maximizer of the isoperimetric problem \eqref{P0}-\eqref{isocons}.
\end{remark}

\begin{example}
Suppose that we want to find functions defined on
$[-a,a]\cap\mathbb{T}$ that take
$$J(y)=\int_{-a}^a y^\sigma(t)\Delta t$$
to its largest value (see Remark~\ref{rem:max}) and that satisfy the conditions
$$y(-a)=y(a)=0,\ \ I(y)=\int_{-a}^a \sqrt{1+(y^\Delta(t))^2}\Delta
t=l>2a.$$
Note that if $y$ is an extremal for $I$, then $y$ is a line
segment \cite{CD:Bohner:2004}, and therefore $y(t)=0$ for all $t\in[-a,a]$. This implies that $I(y)=2a>2a$, which is a contradiction. Hence, $I$ has no extremals satisfying the boundary conditions and the isoperimetric constraint.
Using Theorem~\ref{T1}, let
$$F=L-\lambda g=y^\sigma-\lambda\sqrt{1+(y^\Delta)^2} \, .$$
Because
$$F_{x}=1,\ \
F_{v}=\lambda\frac{y^\Delta}{\sqrt{1+(y^\Delta)^2}},$$
a necessary optimality condition is given by the following
delta-differential equation:
$$\lambda\left(\frac{y^\Delta}{\sqrt{1+(y^\Delta)^2}}\right)^\Delta-1=0,\
\ t\in[-a,a]^{\kappa^2}.$$
The reader interested in the study of delta-differential equations on time scales is referred to \cite{livro2}
and references therein.
\end{example}

If we restrict ourselves to times scales $\mathbb{T}$ with
$\sigma(t)=a_1t+a_0$ for some $a_1\in\mathbb{R}^+$ and
$a_0\in\mathbb{R}$ ($a_0 = 0$ and $a_1 = 1$ for $\mathbb{T} = \mathbb{R}$; $a_0 = a_1 = 1$ for $\mathbb{T} = \mathbb{Z}$), it follows from the results in \cite{RD} that the same proof of Theorem~\ref{T1} can be used, \emph{mutatis mutandis}, to obtain a necessary optimality condition for the higher-order isoperimetric problem (\textrm{i.e.}, when $L$ and $g$ contain higher order delta derivatives). In this case, the necessary optimality condition \eqref{E-L} is generalized to
$$
\sum_{i=0}^{r}(-1)^i\left(\frac{1}{a_1}\right)^{\frac{(i-1)i}{2}}
F_{u_i}^{\Delta^i}\left(t,y_\ast^{\sigma^r}(t),y_\ast^{\sigma^{r-1}\Delta}(t),
\ldots,y_\ast^{\sigma\Delta^{r-1}}(t),y_\ast^{\Delta^r}(t)\right)=0 \, ,
$$
where $F=L-\lambda g$, and functions $(t,u_0,u_1,\ldots,u_r) \rightarrow L(t,u_0,u_1,\ldots,u_r)$ and $(t,u_0,u_1,\ldots,u_r) \rightarrow g(t,u_0,u_1,\ldots,u_r)$ are assumed
to have (standard) partial derivatives with respect
to $u_0,\ldots,u_r$, $r\geq 1$, and partial delta derivative with
respect to $t$ of order $r+1$.


\subsection{Sturm-Liouville eigenvalue problems}
\label{sub:sec:ep}

Eigenvalue problems on time scales have been studied in
\cite{SLP}. Consider the following Sturm-Liouville eigenvalue problem: find nontrivial solutions to the delta-differential equation
\begin{equation}
\label{rui6} y^{\Delta^2}(t) +q(t)y^\sigma(t)+\lambda
y^\sigma(t)=0, \ t\in[a,b]^{\kappa^2},
\end{equation}
for the unknown $y:[a,b]\rightarrow\mathbb{R}$ subject to the
boundary conditions
\begin{equation}
\label{rui4} y(a)=y(b)=0.
\end{equation}
Here $q:[a,b]\rightarrow\mathbb{R}$ is a continuous function and
$y^{\Delta^2}=(y^\Delta)^\Delta$.

Generically, the only solution to equation (\ref{rui6}) that
satisfies the boundary conditions (\ref{rui4}) is the trivial
solution, $y(t)=0$ for all $t\in[a,b]$. There are, however,
certain values of $\lambda$ that lead to nontrivial solutions.
These are called \emph{eigenvalues} and the corresponding
nontrivial solutions are called \emph{eigenfunctions}. These
eigenvalues may be arranged as
$-\infty<\lambda_1<\lambda_2<\ldots$ (see Theorem~1 of \cite{SLP}) and $\lambda_1$ is called the \emph{first eigenvalue}.

Consider the functional defined by
\begin{equation}
\label{rui7}
J(y)=\int_a^b((y^\Delta)^2(t)-q(t)(y^\sigma)^2(t))\Delta t,
\end{equation}
and suppose that $y_\ast\in$ C$_{\textrm{rd}}^2$ (functions that
are twice delta differentiable with rd-continuous second delta
derivative) is a local minimum for $J$ subject to the
boundary conditions (\ref{rui4}) and the isoperimetric constraint
\begin{equation}
\label{isoc1} I(y)=\int_a^b (y^\sigma)^2(t)\Delta t=1.
\end{equation}
If $y_\ast$ is an extremal for $I$, then we would have
$-2y^\sigma(t)=0,\ t\in[a,b]^{\kappa}$. 
Noting that $y(a)=0$, using
Lemma~\ref{lemtecn} we would conclude that $y(t)=0$ for all
$t\in[a,b]$. No extremals for $I$ can therefore satisfy the
isoperimetric condition (\ref{isoc1}). Hence, by Theorem~\ref{T1}
there is a constant $\lambda$ such that $y_\ast$ satisfies
\begin{equation}
\label{rui5}
F_{y^\Delta}^\Delta(t,y^\sigma_\ast(t),y^\Delta_\ast(t))
-F_{y^\sigma}(t,y^\sigma_\ast(t),y^\Delta_\ast(t))=0,
\end{equation}
with $F=(y^\Delta)^2-q(y^\sigma)^2-\lambda(y^\sigma)^2$. It is
easily seen that (\ref{rui5}) is equivalent to (\ref{rui6}). The
isoperimetric problem thus corresponds to the Sturm-Liouville
problem augmented by the normalizing condition (\ref{isoc1}),
which simply scales the eigenfunctions. Here, the Lagrange
multiplier plays the role of the eigenvalue.

\begin{theorem}
\label{T2}
Let $\lambda_1$ be the first eigenvalue for the Sturm-Liouville
problem (\ref{rui6}) with boundary conditions (\ref{rui4}), and
let $y_1$ be the corresponding eigenfunction normalized to satisfy the isoperimetric constraint (\ref{isoc1}). Then, among functions
in C$_{\textrm{rd}}^2$ that satisfy the boundary conditions
(\ref{rui4}) and the isoperimetric condition (\ref{isoc1}), the
functional $J$ defined by (\ref{rui7}) has a minimum at $y_1$.
Moreover, $J(y_1)=\lambda_1$.
\end{theorem}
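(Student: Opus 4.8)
The plan is to show that the Sturm--Liouville eigenvalue problem \eqref{rui6}--\eqref{rui4} is, after the normalization \eqref{isoc1}, exactly the isoperimetric problem already analyzed, and that the value of the minimizing functional coincides with the first eigenvalue. The groundwork is laid in the discussion preceding the statement: with $F=(y^\Delta)^2-q(y^\sigma)^2-\lambda(y^\sigma)^2$, the necessary optimality condition \eqref{rui5} supplied by Theorem~\ref{T1} is equivalent to the Sturm--Liouville equation \eqref{rui6}, with the Lagrange multiplier playing the role of the eigenvalue. The first thing I would establish is therefore the reverse reading of this correspondence: any eigenfunction $y_1$ for the eigenvalue $\lambda_1$, normalized via \eqref{isoc1}, is an admissible function satisfying the Euler--Lagrange equation with multiplier $\lambda=\lambda_1$, so it is a candidate critical point of $J$ on the constraint set.

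Next I would compute the value $J(y_1)$ directly. Starting from \eqref{rui7}, I would apply integration by parts in the form \eqref{part2} of Lemma~\ref{integracao partes} to the term $\int_a^b (y_1^\Delta)^2(t)\,\Delta t=\int_a^b y_1^\Delta(t)\,y_1^\Delta(t)\,\Delta t$, using the boundary conditions \eqref{rui4} to kill the boundary term $[(y_1 y_1^\Delta)(t)]_{t=a}^{t=b}$, to obtain
\begin{equation*}
\int_a^b (y_1^\Delta)^2(t)\,\Delta t = -\int_a^b y_1^{\Delta^2}(t)\,y_1^\sigma(t)\,\Delta t .
\end{equation*}
Substituting the eigenvalue equation \eqref{rui6}, namely $y_1^{\Delta^2}=-q\,y_1^\sigma-\lambda_1 y_1^\sigma$, converts this into $\int_a^b (q(t)+\lambda_1)(y_1^\sigma)^2(t)\,\Delta t$. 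Feeding this back into \eqref{rui7} cancels the $q$-term and leaves $J(y_1)=\lambda_1\int_a^b (y_1^\sigma)^2(t)\,\Delta t$, which equals $\lambda_1$ by the normalization \eqref{isoc1}. This establishes the claim $J(y_1)=\lambda_1$.

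For the minimality claim I would argue that the same computation applies to an arbitrary admissible competitor. Any $y\in C_{\mathrm{rd}}^2$ satisfying \eqref{rui4} and \eqref{isoc1} can be expanded in the orthonormal system of eigenfunctions furnished by Theorem~1 of \cite{SLP}, with the inner product $\langle u,w\rangle=\int_a^b u^\sigma w^\sigma\,\Delta t$ in which \eqref{isoc1} reads $\langle y,y\rangle=1$. Using integration by parts exactly as above, the functional takes the Rayleigh-quotient form $J(y)=-\int_a^b y^{\Delta^2}(t)\,y^\sigma(t)\,\Delta t-\int_a^b q(t)(y^\sigma)^2(t)\,\Delta t$; writing $y=\sum_k c_k y_k$ and using orthogonality together with $y_k^{\Delta^2}=-(q+\lambda_k)y_k^\sigma$ yields $J(y)=\sum_k \lambda_k c_k^2$ while \eqref{isoc1} gives $\sum_k c_k^2=1$. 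Since $\lambda_1\le\lambda_k$ for every $k$, we get $J(y)=\sum_k\lambda_k c_k^2\ge\lambda_1\sum_k c_k^2=\lambda_1=J(y_1)$, so $J$ attains its minimum at $y_1$.

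The main obstacle I anticipate is the spectral-expansion step: rigorously justifying the eigenfunction expansion of an arbitrary admissible $y$ and the term-by-term validity of the integration-by-parts manipulations on a general time scale requires the completeness and orthogonality of the Sturm--Liouville eigenfunctions, which must be imported from \cite{SLP} rather than derived here. If that spectral theory is not invoked, an alternative is the more elementary route of writing a competitor as $y=y_1+\eta$ with $\eta(a)=\eta(b)=0$ and $\eta$ small, expanding $J(y)$ and the constraint, and using the Euler--Lagrange equation satisfied by $y_1$ to show the first variation vanishes and the second-order remainder is nonnegative; this yields only a local minimum, matching the hypothesis of Theorem~\ref{T1}, and sidesteps the completeness question at the cost of global information.
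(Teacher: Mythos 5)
Your computation of $J(y_1)=\lambda_1$ is correct and coincides with the paper's central computation: multiply \eqref{rui6} by $y^\sigma$, delta integrate from $a$ to $b$, integrate by parts via \eqref{part2} using $y(a)=y(b)=0$, and invoke the normalization \eqref{isoc1}. The genuine gap is in your minimality argument. You expand an arbitrary admissible $y$ in an orthonormal system of eigenfunctions and conclude $J(y)=\sum_k\lambda_k c_k^2\geq\lambda_1\sum_k c_k^2=\lambda_1$; but this requires completeness of the eigenfunction system and the validity of term-by-term manipulation of the resulting series on a general time scale, and neither is available here. The paper imports from Theorem~1 of \cite{SLP} only the fact that the eigenvalues can be arranged as $-\infty<\lambda_1<\lambda_2<\cdots$, not an expansion theorem, so your key step rests on spectral theory that is not established in this framework --- as you yourself flag. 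Your proposed fallback (write $y=y_1+\eta$ and examine first and second variations) does not repair this: besides yielding only local minimality, the nonnegativity of the second-order remainder is essentially equivalent to the inequality $J(y)\geq\lambda_1$ you are trying to prove, so nothing is gained.

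The paper's own proof avoids spectral expansions entirely by running the argument conditionally through Theorem~\ref{T1}: suppose $J$ attains its minimum at some admissible $y$. Since no extremal of $I$ can satisfy \eqref{isoc1} (by Lemma~\ref{lemtecn}, as verified in the discussion preceding the theorem), Theorem~\ref{T1} applies, so $y$ satisfies \eqref{rui6} with some multiplier $\lambda$; the same multiply-and-integrate computation then gives $J(y)=\lambda$, and since $y$ is nontrivial by \eqref{isoc1}, $\lambda$ must be an eigenvalue, hence at least $\lambda_1$. As $y_1$ is admissible with $J(y_1)=\lambda_1$, the minimum value is $\lambda_1$, attained at $y_1$. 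You should note that this argument (like the classical one in \cite{Brunt}) tacitly assumes the minimum exists; your Rayleigh-expansion route would in fact prove the stronger unconditional bound $J(y)\geq\lambda_1$ for every admissible $y$, but only at the price of a completeness theorem for Sturm--Liouville eigenfunctions on time scales that neither this paper nor its citation of \cite{SLP} provides.
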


\begin{proof}
Suppose that $J$ has a minimum at $y$ satisfying conditions
(\ref{rui4}) and (\ref{isoc1}). Then $y$ satisfies equation
(\ref{rui6}) and multiplying this equation by $y^\sigma$ and delta
integrating from $a$ to $b$, we obtain
\begin{equation}
\label{rui8} \int_a^b y^\sigma(t)y^{\Delta^2}(t)\Delta t +\int_a^b
q(t)(y^\sigma)^2(t)\Delta t+\lambda\int_a^b (y^\sigma)^2(t)\Delta
t=0.
\end{equation}
Since $y(a)=y(b)=0$,
$$\int_a^b y^\sigma(t)y^{\Delta^2}(t)\Delta
t=\left[y(t)y^\Delta(t)\right]_{t=a}^{t=b}-\int_a^b
(y^\Delta)^2\Delta t=-\int_a^b (y^\Delta)^2\Delta t,$$
and by (\ref{isoc1}), (\ref{rui8}) reduces to
$$\int_a^b [(y^\Delta)^2-q(t)(y^\sigma)^2(t)]\Delta t=\lambda,$$
that is, $J(y)=\lambda$.
Due to the isoperimetric condition, $y$ must be a nontrivial
solution to (\ref{rui6}) and therefore $\lambda$ must be an
eigenvalue. Since there exists a least element within the
eigenvalues, $\lambda_1$, and a corresponding eigenfunction $y_1$
normalized to meet the isoperimetric condition, the minimum value
for $J$ is $\lambda_1$ and $J(y_1)=\lambda_1$.
\end{proof}


\bibliographystyle{amsalpha}

\begin{thebibliography}{ZZ}

\bibitem{Agarwal}
R. Agarwal, M. Bohner, D. O'Regan\ and\ A. Peterson,
Dynamic equations on time scales: a survey,
J. Comput. Appl. Math. {\bf 141} (2002), no.~1-2, 1--26.

\bibitem{SLP}
R. Agarwal, M. Bohner\ and\ P. J. Y. Wong,
Sturm-Liouville eigenvalue problems on time scales,
Appl. Math. Comput. {\bf 99} (1999), no.~2-3, 153--166.

\bibitem{BHAR}
C. D. Ahlbrandt\ and\ B. J. Harmsen,
Discrete versions of continuous isoperimetric problems,
J. Differ. Equations Appl. {\bf 3} (1998), no.~5-6, 449--462.

\bibitem{Atici06}
F. M. Atici, D. C. Biles\ and\ A. Lebedinsky,
An application of time scales to economics,
Math. Comput. Modelling {\bf 43} (2006), no.~7-8, 718--726.

\bibitem{ZbigDelfim}
Z. Bartosiewicz\ and\ D. F. M. Torres,
Noether's theorem on time scales,
J. Math. Anal. Appl. {\bf 342} (2008), no.~2, 1220--1226.

\bibitem{CD:Bohner:2004}
M. Bohner,
Calculus of variations on time scales,
Dynam. Systems Appl. {\bf 13} (2004), no.~3-4, 339--349.

\bibitem{BohnerGuseinov}
M. Bohner\ and\ G. Sh. Guseinov,
Double integral calculus of variations on time scales,
Comput. Math. Appl. {\bf 54} (2007), no.~1, 45--57.

\bibitem{livro}
M. Bohner\ and\ A. Peterson,
{\it Dynamic equations on time scales},
Birkh\"auser Boston, Boston, MA, 2001.

\bibitem{livro2}
M. Bohner\ and\ A. Peterson,
{\it Advances in dynamic equations on time scales},
Birkh\"auser Boston, Boston, MA, 2003.

\bibitem{RD}
R. A. C. Ferreira\ and\ D. F. M. Torres,
Higher-order calculus of variations on time scales,
Proceedings of the Workshop on Mathematical Control Theory
and Finance, Lisbon, 10-14 April 2007, 150--158.
In: {\it Mathematical Control Theory and Finance},
Sarychev, A.; Shiryaev, A.; Guerra, M.; Grossinho, M.d.R. (Eds.),
Springer, 2008.

\bibitem{RD2}
R. A. C. Ferreira\ and\ D. F. M. Torres,
Remarks on the calculus of variations on time scales,
Int. J. Ecol. Econ. Stat. {\bf 9} (2007), no.~F07, 65--73.

\bibitem{Hilger90}
S. Hilger,
Analysis on measure chains---a unified approach to continuous
and discrete calculus,
Results Math. {\bf 18} (1990), no.~1-2, 18--56.

\bibitem{Hilger97}
S. Hilger,
Differential and difference calculus---unified!,
Nonlinear Anal. {\bf 30} (1997), no.~5, 2683--2694.

\bibitem{zeidan}
R. Hilscher\ and\ V. Zeidan,
Calculus of variations on time scales: weak local piecewise
$C\sp 1\sb {\rm rd}$ solutions with variable endpoints,
J. Math. Anal. Appl. {\bf 289} (2004), no.~1, 143--166.

\bibitem{Brunt}
B. van Brunt,
{\it The calculus of variations},
Springer, New York, 2004.

\end{thebibliography}


\end{document}